\renewcommand{\abstract}{\textbf{Abstract}}
\renewcommand{\d}{{\rm d}}
\newcommand{\I}{\mathrm{i}}
\DeclareMathOperator{\id}{id}
\newtheorem{satz}{Satz}[subsection]
\newtheorem*{theorem*}{Theorem}
\newtheorem{proposition}[satz]{Proposition}
\newtheorem*{definition*}{Definition}
\date{}
\begin{document}
\title{\vspace{-1.2cm}\textbf{\textit{On Moyal-Weyl deformations of $\mathrm{DGA}$ and $\mathrm{DGCA}$}}}
\author{\textit{Johannes L\"offler}}
\maketitle
\begin{abstract} 
We consider a natural variant of the Moyal-Weyl product and show that it yields a functorial deformation of differential graded algebras and that we can deform coalgebras in a similar way. The Moyal-Weyl deformation of graded algebras has already been introduced by Fedosov \cite{Fe} and also appears in \cite{Ge} as part of an $A_\infty$ structure, but we are not aware that the analogous result for differential graded coalgebras already appeared in the literature and discuss the Moyal-Weyl  of the category of complexes as an example. 
\end{abstract}
\subsection*{\textit{Introduction}}
In the last decades deformation theories appeared in many branches of mathematics and physics. The original idea of deformation seems quite hard to trace back, but some of the original literature surely are \cite{Dir}, \cite{BFFLS1}, \cite{BeGCoQu}, \cite{Moy} and finally the impressive proof of deformation quantization \cite{K}. 

In quantization procedures it is common to denote the formal deformation parameter by $\hbar$ in honour of Max Planck. Of course in physics
$\hbar\approx6.58211928...\times10^{-16}eV\cdot{s}$
is not really a variable, neither a constant that we are able to determine exactly in experiments, therefore we expect quantizations to be rigid with respect to fluctuations of $\hbar$. Notice that the dimension of $\hbar$ is
$[mass][length]^2/[time]$
and inverse to the dimension of the ordinary Poisson bracket on the phase space $T(\mathbb{R}^{d})$ we will soon describe in formula \ref{CanPoi}. 

In the deformation quantization of a Poisson manifold $(M,\Pi)$ the definition of a $\star$ product is central: A $\star$ product is a $\mathbb{C}[\![\hbar]\!]$-bilinear associative operation $\star:{C^{\infty}(M)[\![\hbar]\!]}\times{C^{\infty}(M)[\![\hbar]\!]}\rightarrow{C^{\infty}(M)[\![\hbar]\!]}$ of the shape
$f\star{g}=\sum_{n=0}^{\infty}\hbar^{n}B_{n}(f,g)$
with bi-differential operators $B_n$ and with the properties $1\star{f}=f=f\star1$, $B_{0}(f,g)=fg$ and $B_{1}(f,g)-B_{1}(g,f)=\I\lbrace{f,g}\rbrace$ where $\lbrace{\cdot,\cdot}\rbrace$ is the Poisson bracket. Two star products on $(M,\Pi)$ are equivalent if there exists a formal series
$S=id+\sum_{l=1}^{\infty}\hbar^{l}S_{l}$
of $\mathbb{C}[[\hbar]]$-linear operators $S_{l}:{C^{\infty}(M)[[\hbar]]}\rightarrow{C^{\infty}(M)[[\hbar]]}$ with $S_{l}(1)=0$ for $l\geq1$ and
\begin{equation}\label{Equi}
f\star{g}=S^{-1}(Sf\star'Sg)\;\forall\;f,g\in{C^{\infty}(M)[[\hbar]]}
\end{equation}

For the ordinary Poisson bracket on the phase space $T(\mathbb{R}^{d})$ given by
\begin{equation}\label{CanPoi}
\{f,g\}=\sum_{k=1}^{d}\Biggr(\frac{\partial{f}}{\partial{q^k}}\frac{\partial{g}}{\partial{p_k}}-\frac{\partial{f}}{\partial{p_k}}\frac{\partial{g}}{\partial{q^k}}\Biggr)
\end{equation}
the standard $\star$ product, named Weyl-Moyal product, is the binary operation defined by
$$f\star{g}:=\mu\circ\exp\Bigr(-\I\hbar\frac{\partial}{\partial{p_k}}\otimes\frac{\partial}{\partial{q^k}}\Bigr)f\otimes{g}$$
where $\mu$ is the multiplication \cite{Groe}. The general Moyal-Weyl product formula for $(\mathcal{A},\mu)$ an associative algebra and $D^i,D_i:\mathcal{A}\rightarrow\mathcal{A}$ commuting derivations is
\begin{equation}\label{WM}\mu\circ\exp\left(-\I\hbar\sum_{i=1}^{n}D^i\otimes{D}_i\right)\end{equation}
where we use the Koszul sign conventions
$(f\otimes{g})(v\otimes{w})=(-1)^{\vert{g}\vert\vert{v}\vert}f(v)\otimes{g(w)}$ and $(f\otimes{g})\circ(f'\otimes{g'})=(-1)^{\vert{g}\vert\vert{f'}\vert}f\circ{f'}\otimes{g}\circ{g'}$. Formula \ref{WM} defines a deformed associative product \cite{Gu}, for a detailed proof of the associativity of \ref{WM} we refer to \cite{WaB}. This product is an important ingredient in the Fedosov deformation quantization of symplectic manifolds, see \cite{J} for a construction of Fedosov $\star$ products on K\"ahler manifolds of constant sectional curvature.

\textbf{Acknowledgements:} I thank H.-Y. Yeh for discussions and pointing out that it could be interesting to consider the bosonic-fermionic aspects of the deformation. I thank C.-A. Abad, D. Roytenberg and G. Schaumann for discussions, reading of draft versions. Last but not least I thank the MPIM in Bonn. johannes@mpim-bonn.mpg.de

\section{\textit{Functorial deformation of differential graded algebras}}
This paper is inspired by deformation quantization, but considers a different input data.
The deformation of the exterior algebra is one of many natural examples that appear in the mathematical literature and physics where one can apply the following proposition \ref{De1}. The deformation \ref{De1} is in some sense just the simplest incarnation of the Moyal-Weyl product \ref{WM} where we choose $n=1$ and $D^i=D_i=\d$ where $\d$ squares to zero:
\begin{proposition}\label{Functorialdeformation} Let $(A,\wedge)$ be a graded algebra over $\mathbb{C}$ and $\d$ with $\d^2=0$ be an odd degree super derivation. The following formula defines a deformed associative product
\begin{align}\label{De1}
a_1\wedge^{\d}{a_2}=a_1\wedge{a_2}+\I\hbar(-1)^{\vert{a_1}\vert}\d{a_1}\wedge\d{a_2}
\end{align}
\end{proposition}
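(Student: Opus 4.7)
The plan is to verify associativity by direct expansion, ordered by powers of $\hbar$. Since $\wedge^{\d}$ is polynomial of degree one in $\hbar$, each of the iterated products $(a_1\wedge^{\d}a_2)\wedge^{\d}a_3$ and $a_1\wedge^{\d}(a_2\wedge^{\d}a_3)$ expands into four summands, of orders $\hbar^0$, $\hbar^1$, $\hbar^1$, $\hbar^2$. The strategy is to compare the two sides order by order in $\hbar$, using only associativity of $\wedge$, the super-Leibniz rule $\d(x\wedge y)=\d x\wedge y+(-1)^{|x|}x\wedge\d y$, and $\d^2=0$.

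At order $\hbar^0$ both sides give $a_1\wedge a_2\wedge a_3$ by associativity of $\wedge$. At order $\hbar^2$ each bracketing contributes a single term in which the outer application of $\wedge^{\d}$ feeds the $\hbar$-part of the inner product through $\d$, namely something proportional to $\d(\d a_1\wedge\d a_2)\wedge\d a_3$ on the left-hand side and to $\d a_1\wedge\d(\d a_2\wedge\d a_3)$ on the right-hand side. In both cases the super-Leibniz rule turns the outer $\d$ into two terms each carrying a factor of $\d^2$, so these contributions vanish identically.

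At order $\hbar^1$ lies the only substantive step. Each bracketing yields one contribution from the outer $\wedge^{\d}$ (of the shape $\d(a_i\wedge a_j)\wedge \d a_k$ or $\d a_i\wedge \d(a_j\wedge a_k)$, which super-Leibniz further splits into two monomials) and one from the inner $\wedge^{\d}$ (of the shape $(\d a_1\wedge\d a_2)\wedge a_3$ on the left and $a_1\wedge(\d a_2\wedge\d a_3)$ on the right). After collecting, each side reduces to a sum of the three monomials $\d a_1\wedge\d a_2\wedge a_3$, $\d a_1\wedge a_2\wedge\d a_3$, $a_1\wedge\d a_2\wedge\d a_3$, each weighted by an explicit Koszul sign. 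The sign check then shows the two sides coincide; this sign bookkeeping is the main but purely routine obstacle.

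Alternatively, one may observe that $\wedge^{\d}$ is the specialization of the Moyal-Weyl formula \ref{WM} at $n=1$, $D^1=D_1=\d$, and invoke the associativity result of \cite{WaB}: the exponential $\exp(-\I\hbar\,\d\otimes\d)$ truncates after its linear term because $(\d\otimes\d)(\d\otimes\d)=-\d^2\otimes\d^2=0$ in the Koszul convention, recovering formula \ref{De1} up to a sign convention for $\I\hbar$. Given how short the direct computation is, however, carrying it out explicitly is preferable for self-containment.
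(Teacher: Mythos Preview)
Your proof is correct and follows essentially the same route as the paper: a direct expansion of both bracketings using only the associativity of $\wedge$, the super-Leibniz rule, and $\d^2=0$, arriving at the identical symmetric expression $(-1)^{|a_1|}\d a_1\wedge\d a_2\wedge a_3+(-1)^{|a_1|+|a_2|}\d a_1\wedge a_2\wedge\d a_3+(-1)^{|a_2|}a_1\wedge\d a_2\wedge\d a_3$ at order~$\hbar$. The only cosmetic difference is that you organize the computation by powers of $\hbar$ and treat the vanishing $\hbar^2$ term explicitly, whereas the paper drops it silently and proceeds by stepwise rewriting; your alternative remark via the truncated Moyal--Weyl exponential is also anticipated in the paper's introduction and remarks.
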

\begin{proof} The proof only uses that $\d$ squares to zero and is an odd degree super derivation of the graded product $\wedge$, for instance we assume the graded Leibniz compatibility $\d(a_1\wedge{a_2})=\d{a_1}\wedge{a_2}+(-1)^{\vert{a_1}\vert}a_1\wedge\d{a_2}$.
The detailed proof of the associativity goes as follows
$$a\wedge^{\d}(b\wedge^{\d}{c})=a\wedge^{\d}\left[b\wedge{c}+\I{\hbar}(-1)^{\vert{b}\vert}\d{b}\wedge\d{c}\right]$$
$$=a\wedge{b}\wedge{c}+\I{\hbar}\left[(-1)^{\vert{a}\vert}\d{a}\wedge\d(b\wedge{c})+(-1)^{\vert{b}\vert}a\wedge\d{b}\wedge\d{c}\right]$$
\begin{align}\label{ASS}=a\hspace{-0.07cm}\wedge\hspace{-0.07cm}{b}\hspace{-0.07cm}\wedge\hspace{-0.07cm}{c}\hspace{-0.05cm}+\hspace{-0.05cm}\frac{(\hspace{-0.05cm}-1\hspace{-0.05cm})^{\vert{a}\vert}\d{a}\hspace{-0.07cm}\wedge\hspace{-0.07cm}\d{b}\hspace{-0.07cm}\wedge\hspace{-0.07cm}{c}+(\hspace{-0.05cm}-1\hspace{-0.05cm})^{\vert{a}\vert+\vert{b}\vert}\d{a}\hspace{-0.07cm}\wedge\hspace{-0.07cm}{b}\hspace{-0.07cm}\wedge\hspace{-0.07cm}\d{c}+(\hspace{-0.05cm}-1\hspace{-0.05cm})^{\vert{b}\vert}a\hspace{-0.07cm}\wedge\hspace{-0.07cm}\d{b}\hspace{-0.07cm}\wedge\hspace{-0.07cm}\d{c}}{1/{\I\hbar}}\end{align}
$$=a\wedge{b}\wedge{c}+\I{\hbar}\left[(-1)^{\vert{a}\vert}\d{a}\wedge\d{b}\wedge{c}+(-1)^{\vert{a}\wedge{b}\vert}\d(a\wedge{b})\wedge\d{c}\right]$$
$$=\left[a\wedge{b}+\I{\hbar}(-1)^{\vert{a}\vert}\d{a}\wedge\d{b}\right]\wedge^{\d}{c}=(a\wedge^{\d}{b})\wedge^{\d}{c}$$\end{proof}
\subsubsection*{\textit{Remarks}}
\begin{itemize}
\item With the Koszul sign convention we could also rewrite the deformation \ref{De1} in the shape
$\wedge^d=\wedge+\I\hbar\wedge(\d\otimes\d)$ and analog the formula
$a_1\wedge^{\d}{a_2}=a_1\wedge{a_2}+\I\hbar(-1)^{\vert{a_1}\vert\vert\d\vert}\d{a_1}\wedge\d{a_2}$
is a deformation of any graded associative algebra if $\d$ is a derivation with zero square, but notice if $\d$ is of even degree, {\em i.e.} $\d(a\cdot{b})=\d{a}\cdot{b}+a\cdot\d{b}$ the vanishing $\d^2=0$ would also imply $\d^2({a}\cdot{b})=2\d{a}\cdot\d{b}=0$, in other words the analog deformation with formula \ref{De1} does not make non-trivial sense if $\d$ is even.

\item Notice that the added deformation term $\I\hbar(-1)^{\vert{a_1}\vert}\d{a_1}\wedge\d{a_2}$ only depends on $a_1$ and $a_2$ modulo closed terms. We can also write for example
$a_1\wedge^{\d}{a_2}=a_1\wedge{a_2}+\I\hbar(-1)^{\vert{a_1}\vert}\d\left({a_1}\wedge\d{a_2}\right)$ or the more symmetric version
$a_1\wedge^{\d}{a_2}=a_1\wedge{a_2}+\frac{\I\hbar}{2}\d\left(-\d{a_1}\wedge{a_2}+(-1)^{\vert{a_1}\vert}{a_1}\wedge\d{a_2}\right)$.
The meaning of this rewriting is that the added deformation term is exact.

Let us at this point recall Stokes theorem
$\int_{M}\d\alpha=\int_{\partial{M}}\alpha$
for $\alpha\in\Omega^{\dim(M)-1}(M)$ a differential form on a manifold $M$, hence integration of exact forms of maximal degree is trivial if the boundary of $M$ is empty.
\item Clearly the product $\wedge^\d$ is not graded in the sense of the $\wedge$ grading, but it is a graded product if we as usual give the formal parameter $\hbar$ degree $-2$. Notice that this quantization is a first order deformation, hence it has no convergence problems or issues with physical dimensions of the deformation parameter $\hbar$.
\item As for the wedge product the deformation $\wedge^\d$ still inherits the property that a $\wedge^\d$ product of two bosons $\in\Omega^{2\bullet}(M)[[\hbar]]$ as well as the $\wedge^\d$ product of two fermions $\in\Omega^{2\bullet+1}(M)[[\hbar]]$ are ``bosonic" sums and the $\wedge^\d$ product of a boson with a fermion is a ``fermionic" sum, here we have copied the boson-fermion language from Witten's article on super-symmetry and Morse theory \cite{WiSM}.

In the following we will assume that the deformation parameter $\hbar$ and differential $\d$ are real, {\em i.e.} $\overline{\hbar}=\hbar$ and $\overline{\d{a}}=\d\overline{a}$. The ``{Pauli exclusion principle}"
$a_1\wedge{a}_2=(-1)^{\vert{a_1}\vert\vert{a_2}\vert}a_2\wedge{a}_1$
does no longer hold for $\wedge_\d$ and we just have the ``{weak Pauli exclusion principle}"
$\overline{a_1\wedge^{\d}{a}_2}=(-1)^{\vert{a_1}\vert\vert{a_2}\vert}\overline{a}_2\wedge^{\d}\overline{a}_1$.
Two fermions in this model can {\em a priori} non trivially be in the same state $f$ and create the exact boson
$f\wedge^\d{f}=(\hbar/\I)\d\left({f}\wedge\d{f}\right)$.
Beside this difference also for the deformed product the formulas
$\overline{b\wedge^{\d}{f}}=\overline{f}\wedge^{\d}\overline{b}$,
$\overline{f_1\wedge^{\d}{f_2}}=-\overline{f_2}\wedge^{\d}\overline{f_1}$
and $\overline{b_1\wedge^{\d}{b_2}}=\overline{b_2}\wedge^{\d}\overline{b_1}$ hold, {\em i.e.} the bosons $(\Omega^{2\bullet}(M)[[\hbar]],\wedge^\d)$ form a *-algebra over $\mathbb{C}$.

For the bosons there is another proof of the associativity of \ref{De1}: With help of the invertible maps
$S:=\id+\sqrt{\hbar/\I}\d$ and ${S}^{-1}=\id-\sqrt{\hbar/\I}\d$:
it is easy to verify that we have an associative product by the equivalence formula
$$S^{-1}\left(Sa_1\wedge{S}a_2\right)=a_1\wedge{a_2}+\sqrt{\hbar/\I}\Big(1-(-1)^{\vert{a_1}\vert}\Bigr)a_1\wedge\d{a_2}+\I\hbar(-1)^{\vert{a_1}\vert}\d{a_1}\wedge\d{a_2}$$
\item As usual introducing the anti-commuting coordinates $\mathrm{V}^i=\d{x}^i$ we can interpret the term $\d{a}\wedge\d{b}=\mathrm{V}^i\mathrm{V}^j\partial_i{a}\wedge\partial_j{b}$ as a super-Poisson bracket, we thank D. Roytenberg for pointing out this interpretation.
\end{itemize}

It is easy to realize that $\d$ is also a super derivation with respect to $\wedge^\d$ and if a $\mathbb{C}$-linear morphism
$\varphi:(A,\wedge_A,\d_A)[[\hbar]]\rightarrow(B,\wedge_B,\d_B)[[\hbar]]$
enjoys the compatibilities $\varphi\d_A=\d_B\varphi$ and $\varphi(a_1\wedge_A{a_2})=(\varphi{a}_1\wedge_B{\varphi{a_2}})$
the same identities remain true if we replace $\wedge$ by $\wedge^\d$, hence some of the previous observations can be summarized in a functorial statement:

Consider the category $\mathrm{DGA}$ with objects differential graded algebras $(A,\wedge,\d)[[\hbar]]$ over $\mathbb{C}$ and with arrows structure compatible maps
$\varphi:(A,\wedge_A,\d_A)[[\hbar]]\rightarrow(B,\wedge_B,\d_B)[[\hbar]]$
{\em i.e.} degree respecting, compatible $\mathbb{C}$-linear morphisms. 
\begin{proposition}\label{Fu1}
By $\mathcal{D}(A,\wedge_A,\d_A)[[\hbar]]=(A,\wedge_A^{d_A},\d_A)[[\hbar]]$ and $\mathcal{D}(\varphi)=\varphi$ we have defined a faithful functor $\mathcal{D}:\mathrm{DGA}[[\hbar]]\hookrightarrow{\mathrm{DGA}}[[\hbar]]$. We can also consider \ref{Functorialdeformation} as a faithful functor $\mathcal{D}:\mathrm{DGA}\hookrightarrow{\mathrm{DGA}}[[\hbar]]$.
\end{proposition}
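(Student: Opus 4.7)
The plan is to verify that $\mathcal{D}$ is well-defined on objects (that is, $(A,\wedge_A^{\d_A},\d_A)[[\hbar]]$ really is an object of $\mathrm{DGA}[[\hbar]]$), that $\mathcal{D}$ sends morphisms to morphisms, that it preserves identities and composition, and finally that it is faithful. Faithfulness and the functor axioms will be immediate from the definition $\mathcal{D}(\varphi)=\varphi$, so the content is entirely in the first two points.

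For the object part, associativity of $\wedge_A^{\d_A}$ is given by Proposition \ref{Functorialdeformation}, and $\d_A^2=0$ is preserved because the differential itself is unchanged. The one thing to check is that $\d_A$ is still an odd super derivation with respect to the new product. I would compute $\d_A(a_1\wedge_A^{\d_A}a_2)$ by applying $\d_A$ to formula \ref{De1}: the un-deformed piece produces the ordinary graded Leibniz identity, and the deformation piece $\I\hbar(-1)^{|a_1|}\d_A(\d_A a_1\wedge_A\d_A a_2)$ vanishes because both factors are $\d_A$-closed. Comparing with $\d_A a_1\wedge_A^{\d_A}a_2+(-1)^{|a_1|}a_1\wedge_A^{\d_A}\d_A a_2$, the deformation correction terms in each summand also vanish by $\d_A^2=0$, so the two expressions agree. (This is morally the same observation as the ``the added deformation term is exact'' remark following Proposition \ref{Functorialdeformation}.) The grading is respected if one assigns $\hbar$ the degree $-2$ as already discussed.

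For the morphism part, let $\varphi:(A,\wedge_A,\d_A)\to(B,\wedge_B,\d_B)$ be a DGA morphism, extended $\mathbb{C}[[\hbar]]$-linearly. The chain map condition $\varphi\d_A=\d_B\varphi$ is unchanged, so I only need the multiplicativity with respect to $\wedge^{\d}$. Substituting formula \ref{De1}, pushing $\varphi$ through term-by-term, using that $\varphi$ is degree-preserving so the sign $(-1)^{|a_1|}$ matches $(-1)^{|\varphi a_1|}$, and then using $\varphi\d_A=\d_B\varphi$ on each factor of the deformation term, yields exactly $\varphi(a_1)\wedge_B^{\d_B}\varphi(a_2)$.

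Functoriality ($\mathcal{D}(\id)=\id$ and $\mathcal{D}(\psi\circ\varphi)=\mathcal{D}(\psi)\circ\mathcal{D}(\varphi)$) holds tautologically since the underlying linear map is unchanged, and the same observation makes $\mathcal{D}$ faithful. The second assertion (domain $\mathrm{DGA}$ rather than $\mathrm{DGA}[[\hbar]]$) is obtained by first applying the $[[\hbar]]$-extension functor and then $\mathcal{D}$. There is no real obstacle: the only computation with any substance is the derivation property of $\d_A$ for $\wedge_A^{\d_A}$, and even there the key fact is just $\d_A^2=0$ making the deformation term $\d_A$-closed.
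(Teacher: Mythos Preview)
Your proposal is correct and follows exactly the line the paper itself takes: the paragraph immediately preceding the proposition already records that $\d$ remains a super derivation for $\wedge^{\d}$ and that any $\varphi$ intertwining $\d$ and $\wedge$ automatically intertwines $\wedge^{\d}$, with functoriality and faithfulness then being tautological since $\mathcal{D}(\varphi)=\varphi$. Your explicit check of the derivation property via $\d_A^2=0$ is precisely the computation the paper leaves as ``easy to realize''.
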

\subsection{\textit{Example: Moyal-Weyl deformation of the category of complexes}}
We discuss an application of \ref{Functorialdeformation}: From the deformation quantization point of view the most interesting $\mathrm{DGA}$s are
the space of polyvector fields $T_{poly}(M)=\oplus_{k=-1}^{\infty}{\Gamma}^{\infty}({\wedge}^{k+1}TM)$ and the vector space
$D_{poly}(M)=\oplus_{k=-1}^{\infty}\lbrace{\text{Polydifferential operators}:{C}^{\infty}(M)^{\otimes{k+1}}\rightarrow{C}^{\infty}(M)\rbrace}$
of polydifferential operators on a manifold $M$: We denote by $[\cdot,\cdot]_{S-N}$ the Schouten-Nijenhuis bracket on $T_{poly}(M)$ and a Poisson structure $\Pi$ induces a flat derivation
$\d_\Pi:=[\Pi,\cdot]_{\mathrm{S-N}}:T_{poly}^\bullet(M)\rightarrow{T}_{poly}^{\bullet+1}(M)$
of the $\wedge$ product. The famous Hochschild differential
$\d_H:D_{poly}^\bullet(M)\rightarrow{D}_{poly}^{\bullet+1}(M)$
is a graded flat derivation of the associative cup product $\cup$. In other words the input data of a Poisson tensor endows $T_{poly}(M)$ with a $\mathrm{DGA}$ structure and $D_{poly}(M)$ carries a natural $\mathrm{DGA}$ structure. In both cases the direct application of \ref{Functorialdeformation} looks somehow strange, for example for $(T_{poly}(M),\d_\Pi)$ the in general not invertible Poisson structure $\Pi$ appears ``{quadratic}" in the first order of $\hbar$. However, nevertheless let us mention the following example how to cure this strange ``{quadratic}" deformation into the direction that $\Pi$ can appear in some sense ``{linear}":

Consider the category with objects chain complexes $(C^\bullet,\d^\bullet)$ over $\mathbb{C}$, {\em i.e.} graded vector spaces $C^\bullet$ equipped with linear, square zero maps $\d:C^\bullet\rightarrow{C}^{\bullet+1}$ and the arrows are linear maps $\varphi:C_k^\bullet\rightarrow{C}_l^{\bullet+\vert\varphi\vert}$ where we do not assume that this maps intertwine the boundary maps $\d^\bullet_k,\d^\bullet_l$. It is well-known that there is a differential of degree $1$ acting on the space of graded linear maps
$\varphi:C_k^\bullet\rightarrow{C}_l^{\bullet+\vert{\varphi}\vert}$
by the graded homotopy formula
$\d{\varphi}:=\d_l\circ\varphi-(-1)^{\vert\varphi\vert}\varphi\circ\d_k$.
This natural differential defines a graded derivation of the composition of linear maps, {\em i.e.} whenever it makes sense we have a graded, associative ``product" defined by composition $\varphi\circ\alpha$ for $\alpha:C_j^\bullet\rightarrow{C}_k^{\bullet+\vert\alpha\vert}$ and $\varphi:C_k^\bullet\rightarrow{C}_l^{\bullet+\vert\varphi\vert}$ and also the graded Leibniz rule
$\d(\varphi\circ\alpha)=\d\varphi\circ\alpha+(-1)^{\vert\varphi\vert}\varphi\circ\d\alpha$
holds. Hence analog to \ref{Functorialdeformation} we can deform this composition by the formula
\begin{align*}
\varphi\circ^{\d}\alpha&=\varphi\circ\alpha+\I\hbar\bigr((-1)^{\vert\varphi\vert}\d_l\circ\varphi-\varphi\circ\d_k\bigr)\circ\bigr(\d_k\circ\alpha-(-1)^{\vert\alpha\vert}\alpha\circ\d_j\bigr)\end{align*}
\begin{align*}=\varphi\circ\alpha+\I\hbar\Bigr(&\hspace{-0.07cm}(\hspace{-0.03cm}-1\hspace{-0.02cm})^{\vert\varphi\vert}\d_l\circ\varphi\circ\d_k\circ\alpha\hspace{-0.07cm}-\hspace{-0.07cm}(\hspace{-0.03cm}-1\hspace{-0.02cm})^{\vert\alpha\vert+\vert\varphi\vert}\d_l\circ\varphi\circ\alpha\circ\d_j\hspace{-0.07cm}+\hspace{-0.07cm}(\hspace{-0.03cm}-1\hspace{-0.02cm})^{\vert\alpha\vert}\varphi\circ\d_k\circ\alpha\circ\d_j\Bigr)
\end{align*}
This deformed composition still respects the identity morphisms and differentials in the sense $\varphi\circ^{\d}{\id}=\varphi=\id\circ^{\d}\varphi$ and $\d_l\circ^{\d}\varphi=\d_l\circ\varphi$ and $\varphi\circ^{\d}{\d_k}=\varphi\circ{\d_k}$.

\section{\textit{``{Defect}" for differential graded Lie algebras}}
Here we consider the analog of \label{Functorialdeformation} and  \ref{Functorialcodeformation} for $\mathrm{DGLA}$s, {\em i.e.} we assume a graded $\mathbb{C}$-bilinear bracket $[\cdot,\cdot]$ that satisfies the graded Jacobi identity
$$(-1)^{\vert{a_1}\vert\vert{a_3}\vert}[a_1,[a_2,a_3]]+(-1)^{\vert{a_2}\vert\vert{a_1}\vert}[a_2,[a_3,a_1]]+(-1)^{\vert{a_3}\vert\vert{a_2}\vert}[a_3,[a_1,a_2]]=0$$
and a graded $\mathbb{C}$-linear map of degree $1$ that satisfies the graded Leibniz rule $\d[a_1,a_2]=[\d{a_1},a_2]+(-1)^{\vert{a_1}\vert}[a_1,\d{a_2}]$.
\begin{proposition}
The operation $[\cdot,\cdot]_{\d}$ defined by
$$[a_1,{a_2}]^{\d}:=[a_1,{a_2}]+\I\hbar(-1)^{\vert{a_1}\vert}[\d{a_1},\d{a_2}]$$
has an exact defect of graded Jacobi identity.\end{proposition}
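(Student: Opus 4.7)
The plan is to compute the graded Jacobiator of $[\cdot,\cdot]^\d$ directly by expanding the definition twice and then exhibit the residual defect as $\d$ of an explicit cochain. Applying the definition in turn to the inner and outer bracket yields
\begin{align*}
[a_1,[a_2,a_3]^\d]^\d =\; &[a_1,[a_2,a_3]] + \I\hbar(-1)^{\vert a_1\vert}[\d a_1,\d[a_2,a_3]]\\
&+\I\hbar(-1)^{\vert a_2\vert}[a_1,[\d a_2,\d a_3]] + (\I\hbar)^2(-1)^{\vert a_1\vert+\vert a_2\vert}[\d a_1,\d[\d a_2,\d a_3]],
\end{align*}
and the $\hbar^2$ piece is automatically zero because graded Leibniz together with $\d^2=0$ forces $\d[\d a_2,\d a_3]=0$.

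Next I would form the signed cyclic sum $\mathrm{Jac}^\d(a_1,a_2,a_3):=\sum_{\mathrm{cyc}}(-1)^{\vert a_1\vert\vert a_3\vert}[a_1,[a_2,a_3]^\d]^\d$. The $\hbar^0$ contribution vanishes by the graded Jacobi identity for $[\cdot,\cdot]$, leaving the defect $\mathrm{Jac}^\d = \I\hbar\,D$ with
\[ D = \sum_{\mathrm{cyc}}\Bigl((-1)^{\vert a_1\vert\vert a_3\vert+\vert a_1\vert}[\d a_1,\d[a_2,a_3]] + (-1)^{\vert a_1\vert\vert a_3\vert+\vert a_2\vert}[a_1,[\d a_2,\d a_3]]\Bigr). \]
The key step is then to produce a $P$ with $D=\d P$. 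The natural Ansatz is a signed cyclic sum built from the three shapes $[\d a_1,[a_2,a_3]]$, $[a_1,[\d a_2,a_3]]$, $[a_1,[a_2,\d a_3]]$; their images under $\d$ are computed from graded Leibniz, e.g.\ $\d[\d a_1,[a_2,a_3]] = (-1)^{\vert a_1\vert+1}[\d a_1,\d[a_2,a_3]]$ and $\d[a_1,[\d a_2,a_3]] = [\d a_1,[\d a_2,a_3]] + (-1)^{\vert a_1\vert+\vert a_2\vert+1}[a_1,[\d a_2,\d a_3]]$, and the Koszul signs in the Ansatz are pinned down by matching coefficients with $D$ after one further expansion $\d[a_2,a_3]=[\d a_2,a_3]+(-1)^{\vert a_2\vert}[a_2,\d a_3]$.

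The main obstacle is not conceptual but notational: the cross terms of type $[\d a_1,[\d a_2,a_3]]$ produced by $\d P$ that are not directly present in $D$ must cancel in the cyclic sum, and this is enforced by the graded Jacobi identity applied to the triple $(\d a_1,\d a_2,a_3)$ of \emph{shifted-degree} elements. Carrying out the sign bookkeeping through the nested bracket, the cyclic sum and the graded Jacobi rearrangement is the whole substance of the proof; once a consistent choice of signs has been made, the identity $\mathrm{Jac}^\d = \I\hbar\,\d P$ exhibits the defect as exact, as claimed.
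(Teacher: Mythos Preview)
Your approach is essentially the same as the paper's: expand the double deformed bracket, observe that the $\hbar^0$ piece vanishes by graded Jacobi and the $\hbar^2$ piece by $\d^2=0$, and then exhibit an explicit primitive for the remaining $\I\hbar$-defect. The paper does not carry out the sign bookkeeping either; it simply writes down the final primitive, which is the signed cyclic sum
\[
\frac{2\I\hbar}{3}\sum_{\mathrm{cyc}}\Bigl((-1)^{|a_1||a_3|+|a_1|+|a_2|}[a_1,[a_2,\d a_3]]-(-1)^{|a_1||a_3|}[\d a_1,[a_2,a_3]]\Bigr),
\]
so only two of your three shapes survive (with the overall factor $\tfrac{2}{3}$ reflecting a use of the Jacobi identity to eliminate the third), confirming that your Ansatz and your remark about cancellations via Jacobi on shifted-degree triples are on target.
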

\begin{proof}
By calculation
\begin{align*}\frac{2\I\hbar}{3}\biggr(&(-1)^{\vert{a_1}\vert\vert{a_3}\vert+\vert{a_1}\vert+\vert{a_2}\vert}[a_1,[a_2,\d{a_3}]]-(-1)^{\vert{a_1}\vert\vert{a_3}\vert}[\d{a_1},[a_2,a_3]]\\&+(-1)^{\vert{a_2}\vert\vert{a_1}\vert+\vert{a_2}\vert+\vert{a_3}\vert}[a_2,[a_3,\d{a_1}]]-(-1)^{\vert{a_2}\vert\vert{a_1}\vert}[\d{a_2},[a_3,{a_1}]]\\&+(-1)^{\vert{a_3}\vert\vert{a_2}\vert+\vert{a_1}\vert+\vert{a_2}\vert}[a_3,[a_1,\d{a_2}]]-(-1)^{\vert{a_3}\vert\vert{a_2}\vert}[\d{a_3},[a_1,{a_2}]]\biggr)\end{align*}
is an explicit primitive of the Jacobiator, hence the cohomology class of the defect vanishes.
\end{proof}
\subsubsection*{\textit{Remark}}
\begin{itemize}
\item The question arises if it is possible to construct for $n\geq4$ Taylor components $Q_n:S^n(\mathfrak{g}[1])\rightarrow\mathfrak{g}[1]$ to upgrade this defect deformation to a functor from $\mathrm{DGLA}$ to the category of $L_\infty$ algebras. The previous question has already been considered in the literature, we thank D. Roytenberg for pointing this out: The version of higher derived brackets of Getzler \cite{Get} is a convenient positive answer and  the higher derived brackets of Voronov \cite{Vor} seem related, let us mention the article \cite{Cat}. 
\end{itemize}
\section{\textit{Deformation of graded coalgebras with codifferential}}
\begin{proposition}\label{Functorialcodeformation} Let $\d$ be an odd degree flat coderivation of a graded coproduct $\Delta$ over $\mathbb{C}$. With the Koszul sign convention we have a deformed coproduct 
$$\Delta^\d:=\Delta+\I\hbar(\d\otimes\d)\Delta$$
\end{proposition}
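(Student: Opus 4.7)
The plan is to verify coassociativity $(\Delta^\d\otimes\id)\Delta^\d=(\id\otimes\Delta^\d)\Delta^\d$, dualizing the argument of Proposition \ref{Functorialdeformation}. The two ingredients will be the coderivation identity $\Delta\circ\d=(\d\otimes\id+\id\otimes\d)\circ\Delta$ (with Koszul signs) and $\d^2=0$; these replace the graded Leibniz rule and $\d^2=0$ used in the algebra case.

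First I expand both compositions as polynomials in $\hbar$. The $\hbar^0$ parts agree by coassociativity of $\Delta$. For the $\hbar^2$ parts, both sides involve the operator $(\d\otimes\d)\circ\Delta\circ\d$; substituting the coderivation identity gives $(\d\otimes\d)(\d\otimes\id)\Delta+(\d\otimes\d)(\id\otimes\d)\Delta$, and the Koszul convention turns these into $-(\d^2\otimes\d)\Delta$ and $(\d\otimes\d^2)\Delta$ respectively, each vanishing by $\d^2=0$. So the deformation truncates at first order on both sides, just as it did in \ref{Functorialdeformation}.

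The heart of the verification is matching the $\hbar^1$ terms. Writing $(\Delta\otimes\id)\Delta(x)=(\id\otimes\Delta)\Delta(x)=\sum x\otimes y\otimes z$ in Sweedler notation, I expand both linear-in-$\hbar$ contributions, using the coderivation identity to distribute each $\d$ across the inner $\Delta$. I expect both sides to collect into the same symmetric three-term expression
\begin{equation*}
\I\hbar\sum\bigl((-1)^{|x|}\d x\otimes\d y\otimes z+(-1)^{|x|+|y|}\d x\otimes y\otimes\d z+(-1)^{|y|}x\otimes\d y\otimes\d z\bigr),
\end{equation*}
which is manifestly independent of the bracketing and therefore agrees on the two sides.

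The only real obstacle is Koszul-sign bookkeeping when tracking $\d$ past a first Sweedler factor; there is no structural subtlety beyond that. I would close by observing, in parallel with the discussion preceding Proposition \ref{Fu1}, that $\d$ remains a coderivation of $\Delta^\d$ and that the construction is functorial on the category of differential graded coalgebras.
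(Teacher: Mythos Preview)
Your argument is correct and follows the same route as the paper: expand $(\id\otimes\Delta^\d)\Delta^\d$ and $(\Delta^\d\otimes\id)\Delta^\d$ in $\hbar$, kill the $\hbar^2$ part via $(\d\otimes\d)\Delta\d=0$, and reduce the $\hbar^1$ part on each side to the same symmetric three-term operator $\d\otimes\d\otimes\id+\d\otimes\id\otimes\d+\id\otimes\d\otimes\d$ applied to $(\id\otimes\Delta)\Delta=(\Delta\otimes\id)\Delta$. The only difference is presentational: the paper writes this last step as the single operator identity $(\id\otimes\Delta^\d)\Delta^\d=\bigl(\id+\I\hbar[\d\otimes\d\otimes1+\d\otimes1\otimes\d+1\otimes\d\otimes\d]\bigr)(\id\otimes\Delta)\Delta$ with the Koszul signs suppressed, whereas you unpack the same content in Sweedler notation with the signs made explicit.
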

\begin{proof}
This is just dually to \ref{Fu1}: Again the proof of the coassociativity
$(\id\otimes\Delta^\d)\Delta^\d=(\Delta^\d\otimes\id)\Delta^d$
only uses that $\d$ is assumed to be a flat coderivation, {\em i.e.} we have
$\Delta\d=(\id\otimes\d)\Delta+(\d\otimes\id)\Delta$
and $\d^2=0$ quite analog to the previous proof, we have
$$(\id\otimes\Delta^\d)\Delta^\d=\biggr(\id+\I\hbar\bigr[\d\otimes\d\otimes1+\d\otimes1\otimes\d+1\otimes\d\otimes\d\bigr]\biggr)(\id\otimes\Delta)\Delta$$
where the signs of \ref{ASS} are hidden in the Koszul sign conventions.\end{proof}

Consider the category $\mathrm{DGCA}$ with objects $(C,\Delta_C,\d_C)$ where $C$ is a graded coalgebra with coproduct $\Delta_C:C\rightarrow{C}\otimes{C}$ and flat coderivation $\d_C:C\rightarrow{C}$, and arrows in $\mathrm{DC}$ are by definition graded structure compatible $\mathbb{C}$-linear morphisms, {\em i.e.} for any
$\varphi:(C,\Delta_C,\d_C)[[\hbar]]\rightarrow(D,\Delta_D,\d_D)[[\hbar]]$ we assume 
$\varphi\d_C=\d_D\varphi$ and $(\varphi\otimes\varphi)\Delta_C=\Delta_D\varphi$.
\begin{proposition}
The change of objects $\mathcal{D}(C,\Delta_C,\d_C)=(C,\Delta_C^{\d_C},{d_C})[[\hbar]]$ with help of \ref{Functorialcodeformation} defines a faithful deformation functor ${\mathrm{DC}}\hookrightarrow{\mathrm{DC}}[[\hbar]]$.
\end{proposition}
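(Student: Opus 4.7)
The plan is to dualize Proposition \ref{Fu1}: the coassociativity of $\Delta_C^{\d_C}$ is already the content of Proposition \ref{Functorialcodeformation}, so what remains is (i) checking that $\d_C$ is still a flat coderivation with respect to $\Delta_C^{\d_C}$, (ii) checking that every morphism $\varphi$ of the undeformed category survives the deformation, and (iii) observing that composition, identities and faithfulness are immediate once $\mathcal{D}$ is declared to act as the identity on morphism sets.

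For (i), I would compute the two sides $\Delta_C^{\d_C}\d_C$ and $(\id\otimes\d_C+\d_C\otimes\id)\Delta_C^{\d_C}$ separately. Every cross-term that couples the deformation piece $\I\hbar(\d_C\otimes\d_C)\Delta_C$ with a further application of $\d_C$ contains a factor of $\d_C^2$ after the Koszul rearrangement, and therefore vanishes. Both sides consequently collapse to their undeformed counterparts and agree by the original coderivation identity $\Delta_C\d_C=(\id\otimes\d_C+\d_C\otimes\id)\Delta_C$.

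For (ii), applying $\varphi\otimes\varphi$ to the deformation term gives
\[
(\varphi\otimes\varphi)(\d_C\otimes\d_C)\Delta_C=(\d_D\otimes\d_D)(\varphi\otimes\varphi)\Delta_C=(\d_D\otimes\d_D)\Delta_D\varphi,
\]
where the first equality uses $\varphi\d_C=\d_D\varphi$ together with the fact that $\varphi$ is of degree zero (so that commuting $\varphi$ past $\d_C$ introduces no Koszul sign), and the second equality uses $(\varphi\otimes\varphi)\Delta_C=\Delta_D\varphi$. Adding this to the undeformed piece $\Delta_D\varphi$ produces exactly $\Delta_D^{\d_D}\varphi$, which is the compatibility of $\varphi$ with the deformed coproducts. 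Identity morphisms and composition are then strictly preserved because $\mathcal{D}$ leaves morphisms untouched, and faithfulness is tautological.

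The only subtle point is the Koszul sign bookkeeping in (i); this is directly dual to the associativity computation \ref{ASS} and is where the odd degree of $\d_C$ is essential, for the same reason as in the remark following Proposition \ref{Functorialdeformation}.
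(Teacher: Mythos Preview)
Your argument is correct and is precisely the dual of the discussion preceding Proposition \ref{Fu1}; the paper itself gives no explicit proof of this proposition, leaving it as an immediate consequence of Proposition \ref{Functorialcodeformation} together with the same observations you spell out in (i)--(iii). Your write-up simply makes explicit what the paper treats as evident.
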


\begin{bibdiv}
\begin{biblist}
\bib{BFFLS1}{article}{
author={Bayen~F.}
author={Flato~M.}
author={Fr\o{}nsdal~C.}
author={Lichnerowicz~A.}
author={Sternheimer~D.},
title= {Deformation\-Theory and Quantization.},
journal={  Springer-Verlag, Volume \textbf{94}, Group Theoretical Methods in Physics, 280-289},
date={1979},
}
\bib{BeGCoQu}{article}{
author={Berezin~F. A.},
title= { General Concept of Quantization.},
journal={  Commun. Math. Phys. \textbf{40}, 153-174},
date={1975},
}
\bib{BeGCoQu}{article}{
author={Berezin~F. A.},
title= { General Concept of Quantization.},
journal={  Commun. Math. Phys. \textbf{40}, 153-174},
date={1975},
}
\bib{Cat}{article}{
author={Cattaneo~A.-S.},
author={Sch\"atz~F.},
title= { Equivalence of higher derived brackets},
journal={J. Pure Appl. Algebra \textbf{212}, Issue 11, 2450-2460},
date={2008},
}
\bib{Dir}{article}{
author={Dirac~P.A.M.},
title={The principles of quantum mechanics first and second edition},
journal={Clarendon press. Oxford ,},
date={1930}
}
\bib{Fe}{article}{
author={Fedosov~B.V.},
title= {Analytic formulas for the index of elliptic operators},
journal={Trans. Moscow Math. Soc. \textbf{30}), 159-249,}
date={1974},
}
\bib{Ge}{article}{
author={Block ~J., Getzler~E.},
title= {Equivariant cyclic homology and equivariant differential forms},
journal={Ann. Sci. \'Ecole Norm. Sup. (\textbf{4}), 27(4):493-527,}
date={1994},
}
\bib{Get}{article}{
author={Getzler~E.},
title= {Higher derived brackets},
journal={arXiv: 1010.5859v2},
date={2010},
}
\bib{Groe}{article}{
author={ Groenewold~H.J.},
title={ On the Principles of elementary quantum mechanics},
journal={Physica. \textbf{12}, 405-460},
   date={1946},
}
\bib{Gu}{article}{
author={Gutt~S.},
title= {An explicit $\star$-product on the cotangent bundle of a Lie group},
journal={Lett. Math. Phys. \textbf{7}, no. 3, 249-258,}
date={1983},
}
\bib{K}{article}{
   author={Kontsevich~M.},
   title={Deformation quantization of Poisson manifolds},
   journal={arxiv },
    date={1999},
}
\bib{J}{article}{
author={L\"offler~J. },
title= { Fedosov differentials and Catalan numbers.},
journal={ IOP J. Phys. A: Math. Theor. 43 235404,}
date={ 2010},
}
\bib{Moy}{article}{
author={Moyal~J.E.},
author={Bartlett~M.S.},
title= {Quantum mechanics as a statistical theory},
journal={Mathematical Proceedings of the Cambridge Philosophical Society \textbf{45}, 9-124,},
date={1949},
}
\bib{Vor}{article}{
author={Voronov~Th.},
title= { Higher derived brackets and homotopy algebras},
journal={J. Pure and Appl. Algebra \textbf{202} issues $1-3$, 113-153},
date={2005},
}
\bib{WaB}{article}{
author={Waldmann~S.},
title= { Poisson-Geometrie und Deformationsquantisierung},
journal={Springer-Verlag,}
date={ 2007}
}
\bib{WiSM}{article}{
author={Witten~E.},
title= { Supersymmetry and Morse theory},
journal={J. Diff. Geom},
volume={\textbf{17}, 353-386 },
   date={1982},
}
\end{biblist}
\end{bibdiv}
\vfill{\begin{center}
\textcircled{c} Copyright by Johannes L\"offler, 2014, All Rights Reserved
\end{center}}

\end{document}